\documentclass[reqno,oneside]{amsart}         
\usepackage{mathrsfs}    
\usepackage{verbatim}
\usepackage[backref,colorlinks]{hyperref}  

\usepackage{subcaption}
\usepackage[english]{babel}
\usepackage[utf8]{inputenc}
\usepackage{amsmath,amssymb}
\usepackage{algorithm}
\usepackage{algpseudocode}
\usepackage{geometry}
\usepackage{xspace}

\usepackage{amsmath,amsfonts,amssymb,amsopn,amsthm}
\usepackage{enumerate}
\theoremstyle{plain}
\usepackage{array}

\newtheorem{theorem}{Theorem}%[theorem]
\newtheorem{lemma}[theorem]{Lemma}

\newtheorem{cor}[theorem]{Corollary}

\newcommand{\FF}{\mathcal F}

\title{Star Coloring of Hypergraphs}

\keywords{star coloring, acyclic coloring, hypergraphs, bicolored subgraph}

\author[L. \"Ozkahya]{Lale \"Ozkahya}
\address{Hacettepe University, Department of Computer Engineering, Beytepe 06810 Ankara, Turkey.}
\email{lale.ozkahya@gmail.com}

\author[Polat Sar\i yerl\.i]{Polat Sar\i yerl\.i}
\address{Hacettepe University, Department of Computer Engineering, Beytepe 06810 Ankara, Turkey.}
\email{polatsariyerli@gmail.com}

\begin{document}

\begin{abstract}
We study a generalization of the star coloring problem on hypergraphs. For a family of connected subhypergraphs $\FF$, we define an $\FF$-coloring of a hypergraph as a coloring  avoiding monochromatic hyperedges and any 2-colored member of $\FF$. 
We let $\chi^r_{\FF}(d)$ be the maximum of the minimum number of colors needed for an $\FF$-coloring of an $r$-uniform  hypergraph with maximum degree $d.$ We show bounds for $\chi^r_{\FF}(d)$, that also yield results on star and acyclic coloring problem on hypergraphs.  
\end{abstract}

% Use if graphical abstract is present
%\begin{graphicalabstract}
%\includegraphics{}
%\end{graphicalabstract}

% Research highlights
%\begin{highlights}
%\item 
%\item 
%\item 
%\end{highlights}

\maketitle

\section{Introduction} \label{sec:introduction}

Hypergraph coloring is widely studied, such as in~\cite{bohman2010coloring,cooper2015list,cooper2016coloring,cooper2017sparse,frieze2008chromatic,frieze2011randomly,frieze2013coloring,kostochka2009coloring,kostochka2012conflict,li2022chromatic}. A hypergraph is called \emph{$r$-uniform} if all its hyperedges have exactly $r$ vertices. 
Erd{\H o}s and Lov\'asz \cite{erd1975problems} proved that the vertex set of any $(r+1)$-uniform hypergraph with maximum degree $\Delta$ can be colored with $c\Delta^{1/r}$, for some constant $c,$ such that there is no monochromatic hyperedge. This result is proved by an application of Lov\'asz Local Lemma, which is also introduced in the same paper. 
In this paper, we aim to study a generalization of star coloring problem 
on hypergraphs. 
%two graph coloring problems, namely star coloring and acyclic coloring, to hypergraphs. 
The {\it star-coloring} of a graph, introduced by Gr\"unbaum \cite{grunbaum1973acyclic}, is defined as the proper vertex coloring, where only stars are allowed to be bicolored (2-colored). 
In the same paper, acyclic coloring of graphs is introduced, defined as a proper vertex coloring without bicolored cycles. 
%The star coloring of graphs is introduced by Gr\"unbaum \cite{grunbaum1973acyclic}, who proved that a graph with maximum degree 3 has an acyclic coloring with 4 colors. 
The star and acyclic coloring problems are shown to be NP-complete by Albertson et al.~\cite{albertson2004coloring} and Kostochka~\cite{kostochka1978upper}, respectively.

We consider a generalization of these colorings to hypergraphs, by letting $\FF$ be a fixed family of connected subhypergraphs and 
{\it $\FF$-coloring of a hypergraph} be a vertex coloring with no monochromatic hyperedge and no bicolored member of $\FF$. Let $K_n^r$ denote the complete $r$-uniform hypergraphs on $n$ vertices and $\Delta(H)$ denote the maximum vertex degree in a hypergraph $H.$ 
For a hypergraph $H$, we let $\chi_{\FF}(H)$ be the minimum number of colors needed in an $\FF$-coloring of $H$ and $\chi_{\FF}^r(d):=\max\{\chi_{\FF}(H): H\subset K_n^r\text{ and } \Delta(H)=d\}.$  
We prove the following general upper bound on $\FF$-coloring of hypergraphs.
\begin{theorem}\label{thm:general_uppbnd}
  %Let $H$ be a r-uniform hypergraph of maximum degree $\Delta$, with vertex set $V=V(H)$ and edge set $E = E(H)$. If a coloring of $H$ avoids bichromatic copies of connected subgraphs in a family $\FF$, then 
  For $r\geq 2$, let $\FF$ be a family of connnected hypergraphs, 
$2\leq m=\max_{F\in \FF}|F|$, $r+1\leq a=\min_{F\in \FF}|V(F)|$ and 
  $t=\min\{m, a-1\}.$ Then $\chi_\FF^r(d)\leq \lceil {( 2^{3a-2}m^t (rd)^m)}^\frac{1}{a-2} \rceil.$  
\end{theorem}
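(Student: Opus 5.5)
The plan is to color every vertex of $H$ uniformly and independently with one of $k=\lceil ( 2^{3a-2}m^t (rd)^m)^{\frac{1}{a-2}} \rceil$ colors, and to apply the symmetric Lov\'asz Local Lemma in the form $e\,p\,(D+1)\le 1$. The bad events are of two kinds. Type~A: a hyperedge $e\in E(H)$ is monochromatic. Type~B: a copy $F'$ in $H$ of some $F\in\FF$, with $F'$ taken as the set of its hyperedges $\{e_1,\dots,e_j\}$ ($j\le m$) spanning at least $a$ vertices, is colored with at most two colors. Monochromatic members are already excluded by Type~A, so "at most two colors" is the right event to forbid. Since only the vertex set of a copy matters for whether it is bicolored, I will index Type~B events by connected edge sets, one for each distinct vertex set.

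\textbf{Probabilities.} For Type~A, $P(A_e)=k^{1-r}\le k^{-(a-2)}$, because $r\ge 2$ gives $r-1\ge 1$ and we may assume $k\ge1$ and $a-2\le r-1$. One must be careful here, since $a\ge r+1$ means $a-2\ge r-1$; this is the wrong direction, so Type~A probabilities are actually larger. I will therefore handle the two types with the asymmetric Local Lemma, or else weaken the Type~A bound via $k^{1-r}$ together with the small number of dependencies. For a Type~B event on a vertex set of size $s\ge a$, the probability of using at most two colors is at most $\binom{k}{2}2^s k^{-s}\le 2^{s-1}k^{2-s}$. This is at most $2^{a-1}k^{-(a-2)}$ when $k\ge 2$, since the ratio going from $s$ to $s+1$ is $2/k\le1$. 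This is the source of the factor $k^{a-2}$ in the bound.

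\textbf{Dependency counts.} Each event depends only on the colors of its vertex set, so two events are dependent only if their vertex sets intersect. The key counting step is the number of connected edge sets of size $j\le m$ containing a given vertex $v$. Building a connected edge set by repeatedly adding an edge that meets the current vertex set, and choosing each new edge among the at most $(jr)\cdot d$ edges through a current vertex, gives at most $(rd)^{j-1}\cdot j^{O(j)}$. A tree-encoding argument (rooted spanning trees of the intersection structure, as in Erd\H os--Lov\'asz style counts) sharpens this to about $(4rd)^{j}$, or $m^{t}(rd)^m$ up to powers of $2$. The exponent $t=\min\{m,a-1\}$ arises because the number of attachment choices is bounded by the number of vertices available, at most $\min(m,a-1)$ steps needing a multiplicative factor of $m$. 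An event with vertex set $S$ of size at most $mr$ meets at most $|S|\cdot(\#\text{events through a vertex})$ others. Combining over $j\le m$ with a geometric sum, which costs a factor of $2$, then gives $D+1\le 2^{2a-1} m^t (rd)^m$ roughly, absorbing $|S|$ into the powers of $r$ and $m$.

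\textbf{Conclusion and main obstacle.} With $p\le 2^{a-1}k^{-(a-2)}$ and $e<4$, the LLL condition $4\cdot 2^{a-1}k^{-(a-2)}\cdot 2^{2a-1}m^t(rd)^m\le1$ is exactly $k^{a-2}\ge 2^{3a-2}m^t(rd)^m$, which holds by the choice of $k$. The main obstacle is the counting step: producing precisely the factor $m^t(rd)^m$ with the power $2^{2a-1}$, and reconciling Type~A events, whose probability $k^{1-r}$ can exceed $k^{-(a-2)}$. For the latter, I would first try the asymmetric LLL with weights $x_A=2k^{1-r}$ and $x_B=2p_B$, checking that the $d$-bounded dependency of hyperedges contributes only lower-order terms. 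If that fails, I would instead try absorbing the Type~A events into the count by noting that $(rd)^m\ge rd$ dominates.
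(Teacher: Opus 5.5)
Your closing computation does not prove the theorem, and the two items you list as ``the main obstacle'' are where the proof actually happens. Your final step uses the symmetric condition $e\,p\,(D+1)\le 1$ with $p\le 2^{a-1}k^{-(a-2)}$. That simply drops the monochromatic-edge events. As you note yourself, $a-2\ge r-1$ makes their probability $k^{1-r}$ the larger one, and a symmetric lemma with $p=k^{1-r}$ would force $k^{r-1}$ to be of order $(rd)^m$, far above the stated bound.

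The arithmetic also fails on its own terms. Since $4\cdot 2^{a-1}\cdot 2^{2a-1}=2^{3a}$, not $2^{3a-2}$, even granting $D+1\le 2^{2a-1}m^t(rd)^m$ you would need $k^{a-2}\ge 2^{3a}m^t(rd)^m$; with $e$ in place of $4$ you still lose a factor $e$. That dependency estimate is itself unsupported. Your sketched counts ($j^{O(j)}(rd)^{j-1}$, or $(4rd)^j$) do not give it; for instance, $4^m$ is not bounded by $2^{2a-1}$ when $m\ge a$. Your explanation of $t=\min\{m,a-1\}$ via ``attachment choices'' is not an argument.

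The paper takes your fallback route: the asymmetric local lemma with $y_1=2k^{1-r}$ for edge events and $y_2=2^{a}k^{-(a-2)}$ for $\FF$-events.
\begin{itemize}
\item It bounds the copies of an $i$-edge member through a vertex by $i\,d\,(rd)^{i-1}$ and sums over $i\le m$ to get $\frac{4m}{r}(rd)^m$.
\item So an $\FF$-event has at most $mrd$ edge-event neighbors and at most $4m^2(rd)^m$ $\FF$-event neighbors.
\item The edge-event condition is implied by the $\FF$-event one, and Bernoulli's inequality reduces everything to $16m^2(rd)^m(2/k)^{a-2}\le\frac14$ and $2mrd\,k^{1-r}\le\frac14$.
\end{itemize}

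The first inequality needs only $a\ge 3$ and $t\ge 2$, so $m^t$ is just slack absorbing $m^2$. Your target $2^{2a-1}m^t(rd)^m$ would be too weak even here; roughly $2^{2a-4}m^t(rd)^m$ is needed.

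The second inequality is the idea missing from your sketch. Since the member with $m$ edges is connected, $a\le r+(m-1)(r-1)=m(r-1)+1$. Hence $m(r-1)/(a-2)>1$ and $t(r-1)/(a-2)>1$. This gives $k^{r-1}\ge 2^{3r-3}\,m\,rd$, and so $2mrd\,k^{1-r}\le 2^{4-3r}\le\frac14$. Without this use of connectivity, nothing prevents the edge events from being the bottleneck, and your remark that they contribute ``only lower-order terms'' has no justification.
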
 
Let us call 
a triangle with edges $E_1, E_2, E_3$ such that $\cap_{i=1}^3 E_i=\emptyset$ and 
$|E_1\cap E_j|=\lfloor r/2\rfloor$ for $j=2,3,$  and $|E_2\cap E_3|=\lceil r/2\rceil$, a {\it balanced triangle.} For $\FF$-coloring, we prove the following result. This result provides a lower bound for star and acyclic colorings of hypergraphs as discussed in Section~\ref{sec:appstar}. 
\begin{theorem}\label{thm:lwrbnd}
    Let $\FF$ be a family of connected $r$-uniform hypergraphs, also having the balanced triangle as a member.  
    Then, for $r\geq 2$, $\chi_{\FF}^r(d)=\Omega ((d^3/\ln d)^\frac{1}{3r-4})$. 
\end{theorem}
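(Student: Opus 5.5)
We prove Theorem~\ref{thm:lwrbnd} with the probabilistic method, following the random-graph lower-bound argument for star coloring of graphs (Fertin--Raspaud--Reed), adapted to $r$-uniform hypergraphs. Since the balanced triangle is a member of $\FF$, any $\FF$-coloring avoids bicolored balanced triangles. So it suffices to construct an $r$-uniform hypergraph $H$ with $\Delta(H)\le d$ in which every vertex coloring with $k=c\,(d^3/\ln d)^{1/(3r-4)}$ colors has a monochromatic hyperedge or a bicolored balanced triangle.

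\textbf{The random model and the degree cleanup.} Take the random $r$-uniform hypergraph $H(n,p)$, where each $r$-subset of an $n$-set is a hyperedge independently with probability $p$. Set $p=d/(2\binom{n-1}{r-1})$, so the expected degree is $d/2$. By Chernoff bounds and Markov's inequality, with positive probability only $o(n)$ vertices have degree larger than $d$. We delete them, leaving a subhypergraph $H'$ on at least $n/2$ vertices with $\Delta(H')\le d$. Because of this deletion, the property we prove for $H(n,p)$ must be robust: it must hold inside every vertex subset of linear size, not only in the whole vertex set.

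\textbf{The key property and the union bound.} Fix $k$ and a coloring $\phi$ of a set $U$ with $|U|\ge n/2$ and no monochromatic hyperedge. By averaging, some two color classes $A,B$ satisfy $|A|,|B|\ge n/(4k)$; more efficiently, we sum over all pairs of classes, which carry about $\binom{k}{2}(n/k)^2$ pairs of vertices. A balanced triangle spans $v:=2r-\lfloor r/2\rfloor$ vertices. We fix templates: placements of these $v$ vertices into $A\cup B$ such that every edge $E_i$ meets both $A$ and $B$. Such placements exist because each pairwise intersection has size at least $1$ when $r\ge 2$. Let $X_\phi$ be the number of hyperedge-triples of $H$ realizing such a template. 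Then
\[
\mu=\mathbb E X_\phi=\Theta\!\left(k^{2}(n/k)^{v}p^{3}\right).
\]
Janson's inequality gives $\Pr[X_\phi=0]\le \exp(-\mu+\bar\Delta/2)$, or $\exp(-\mu^2/(2\bar\Delta))$ in the other regime. Here $\bar\Delta$ sums over pairs of triangles that share a hyperedge, and it is computed by counting the configurations of two balanced triangles sharing one or two edges. A union bound over the at most $2^n k^n$ pairs $(U,\phi)$ finishes the argument, provided $\mu\ge C n\ln k$ and $\bar\Delta\le\mu$, or more generally $\mu^2/\bar\Delta\ge Cn\ln k$.

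\textbf{Choosing the parameters.} The remaining task is to choose $n$ as a function of $d$ and $k$ so that the inequality $\mu\gtrsim n\ln k$, together with $p=\Theta(d/n^{r-1})\le 1$, holds exactly when $k\lesssim (d^3/\ln d)^{1/(3r-4)}$. In the computation, $p^3$ contributes $d^3 n^{-3(r-1)}$. The powers of $n$ must cancel against $n\ln k$, and the leftover powers of $k$ should produce the exponent $3r-4$. If the naive pair-of-classes count does not balance, we replace the single fixed pair $A,B$ by a restriction to color classes of size about $n/k$, using that a proper coloring has many such classes. This trades a factor of $k$ against $n$.

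\textbf{Main obstacle.} I expect two steps to be hardest. The first is the Janson variance term $\bar\Delta$: two balanced triangles can overlap in a hyperedge in many ways, and each overlap pattern must contribute at most $\mu$ in the chosen parameter range. This is where the balanced intersection sizes $\lfloor r/2\rfloor,\lceil r/2\rceil$ should be used, since they maximize the number of shared vertices, hence minimize $v$, while still forcing the three edges to be distinct. The second is making this exponent bookkeeping come out to exactly $3r-4$. I would first verify the case $r=2$, where the bound should reduce to the known $\Omega(d^{3/2}/\sqrt{\ln d})$ star-coloring bound, and then generalize.
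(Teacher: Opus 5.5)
Your skeleton matches the paper's: a random $r$-graph, a union bound over colorings, and $\gtrsim n\ln n$ expected 2-colored balanced triangles. There is a genuine gap, though. The step that carries the proof is deferred, and as you set it up it does not cover every coloring.

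\textbf{The missing estimate.} Do the bookkeeping with your own $\mu\asymp k^{2-v}n^{v}p^3$ and $p\asymp d/n^{r-1}$. The condition $\mu\gtrsim n\ln k$ becomes $d^3\gtrsim k^{v-2}n^{3r-2-v}\ln k$. Since $3r-2-v>0$, you must take $n$ as small as the templates allow, namely $n=\Theta(k)$ (the paper uses $x\le n/r$ colors). The exponent $3r-4$ then falls out. But now the color classes have constant average size, so the single-pair averaging is useless. You need a bound on $\Pr[X_\phi=0]$ that beats $k^n$ for \emph{every} coloring in the union bound, however uneven its class sizes. That requires two things you do not supply:
\begin{itemize}
\item a lower bound $\mu_\phi\gtrsim n^2p^3$ valid for all such $\phi$;
\item control of $\bar\Delta_\phi$ when large classes route many templates through a single hyperedge.
\end{itemize}

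\textbf{Properness is not a property of $\phi$.} ``No monochromatic hyperedge'' depends on the random hypergraph, not on the fixed $\phi$ you union-bound over. If $\phi$ puts all but one vertex of $U$ into one class, there are no bicolored templates, since a balanced triangle has no vertex common to all three edges. Then $\Pr[X_\phi=0]=1$. So properness has to enter the probability estimate itself.

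\textbf{The case $r=2$.} Your template-existence claim fails at $r=2$. There the balanced triangle is $K_3$, and ``every edge meets both $A$ and $B$'' would be a proper 2-coloring of $K_3$. In fact the statement is false at $r=2$: for $\FF=\{K_3\}$ every proper coloring is an $\FF$-coloring, so $\chi^2_\FF(d)\le d+1$. The Fertin--Raspaud--Reed bound concerns bicolored $P_4$'s, so your planned $r=2$ sanity check cannot succeed. For $r\ge 3$ templates do exist: put two vertices of $E_2\cap E_3$ into $A$ and $B$ respectively, then split $E_1$.

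\textbf{How the paper gets a uniform estimate.} Given any coloring with at most $n/r$ colors, it chops every color class into blocks of about $3r/4$ vertices and pairs the blocks. This gives $s=\Omega(n^2/r^2)$ vertex sets of order $v$, each meeting at most two colors. In each set it packs $t=\Omega((3/2)^r)$ edge-disjoint balanced triangles. Two different block pairs share at most one block, and a block has fewer than $r$ vertices, so no $r$-set lies in two block pairs. Hence all $st$ triangles are edge-disjoint and appear independently. Any one that appears is either bicolored or contains a monochromatic edge, so the coloring fails either way. This removes the need to restrict to bicolored templates and handles properness automatically. The result is
\[
\Pr\le(1-p^3)^{st}\le e^{-c_rp^3n^2}
\]
uniformly over colorings, with no Janson inequality and no case analysis on class sizes. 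Then $p^3n^2\gtrsim n\ln n$ with $p\asymp d/n^{r-1}$ gives $d^3\asymp n^{3r-4}\ln n$ and $\chi_\FF\ge n/r$.

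The block-size condition actually holds only for $r\ge 4$. At $r=3$ the larger blocks have exactly $r$ vertices, so triangles with an edge inside a block must be discarded. At $r=2$ the paper's independence claim genuinely fails, consistent with the counterexample above.

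Your Janson route could plausibly be completed by applying Janson to such a sub-family, but as written the key estimate is missing.
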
 
In Section~\ref{sec:appstar}, we discuss the implications of Theorems~\ref{thm:general_uppbnd} and~\ref{thm:lwrbnd} on star and acyclic coloring of hypergraphs. In Section~\ref{sec:proofs}, we provide proofs of these theorems. 

\section{Star and Acyclic Coloring of Hypergraphs}\label{sec:appstar}
%We generalize the star and acyclic coloring to hypergraphs in the following sense. 

%{\it Star coloring on hypergraphs:} 
A {\it star} hypergraph is a set of edges $E_1,\dots,E_m$, $m\geq 2$, such that $\cap_i E_i\neq \emptyset.$  We call a subhypergraph with hyperedges $E_0,\dots, E_{m-1}$ a cycle of length $m$ if $E_i\cap E_{i+1}\neq \emptyset$ and $E_{i-1}\cap E_i \cap E_{i+1} = \emptyset$ for each $i$ modulo $m.$  We let a {\it star (acyclic, resp.) coloring of a hypergraph} $H$ be defined as a coloring with no monochromatic hyperedge, where only bicolored subhypergraphs are allowed to be stars (cycles of length at least three, resp.). We let the {\it star (acyclic, resp.) chromatic number of H}, $\chi_s(H)$ ($\chi_a(H)$, resp.) be the minimum number of colors needed in a star (acyclic, resp.) coloring of $H$. We let $\chi_s^r(d)=\max\{\chi_s(H): H\subset K_n^r\text{ and }\Delta(H)=d\}$ and $\chi_a^r(d)=\max\{\chi_a(H): H\subset K_n^r\text{ and }\Delta(H)=d\}$ 
By our definition, $\chi_a^r(d)\leq \chi_s^r(d).$ 

For graphs, the star coloring problem corresponds to an $\FF$-coloring with $\FF=\{P_4\}$. Thus, using Theorem~\ref{thm:general_uppbnd} with $m=3$ and $a=4$ implies the same asymptotic upper bound $\chi_s^2(d)=O(d^{\frac{3}{2}})$ shown by Fertin et al.~\cite{fertin2004star}. 
A hypergraph is {\it linear} if every pair of edges intersect in at most one vertex. 
When $H$ is a linear $r$-uniform hypergraph with maximum degree $d$, then Theorem~\ref{thm:general_uppbnd} implies that $\chi_s(H)\leq \lceil {( 2^{3r-8}3^3 (rd)^3)}^\frac{1}{3r-5} \rceil,$ where Theorem~\ref{thm:general_uppbnd} is applied by letting $\FF$ consist of a triangle and a path on three edges, as these 
are the only minimal nonstar linear subhypergraphs. 
%thus $m=3$ and $a=3r-3.$  

For nonlinear hypergraphs, we consider a stronger coloring condition for finding an upper bound. A star with hyperedges $E_1,\dots, E_m$ is called a {\it sunflower} if 
for all $i\neq j$, $1\leq i,j\leq m,$ $E_i\cap E_j=C$ for some set $C\neq \emptyset.$  
We consider a vertex coloring of hypergraphs without monochromatic hyperedges 
allowing only sunflowers to be bicolored, as this yields also a star coloring. 
%An $\FF$-coloring of hypergraphs, where $\FF$ consists of all minimal subhypergraphs that are not sunflowers, is also a star coloring. 
Thus applying Theorem~\ref{thm:general_uppbnd}, where $\FF$ consists of subhypergraphs with three hyperedges that are not sunflowers, 
we obtain the following upper bound.  

\begin{cor}\label{thm:upperbnd_star}
For $r\geq 3$, $\chi_s^r(d)\leq \lceil {( 2^{3r+1}3^3 (rd)^3)}^\frac{1}{r-1} \rceil.$
\end{cor}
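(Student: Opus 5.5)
The plan is to apply Theorem~\ref{thm:general_uppbnd} to the family $\FF$ of all connected $r$-uniform hypergraphs with exactly three hyperedges that are not sunflowers. Two things are needed: every $\FF$-coloring is a star coloring, and the parameters $m,a,t$ of this $\FF$ give the stated bound.

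\emph{Reduction to $\FF$.} Take an $\FF$-coloring of $H$ and a bicolored connected subhypergraph $H'$ with at least two edges. I will show $H'$ is a sunflower, hence a star. Any subhypergraph of $H'$ is again bicolored, because it is not monochromatic on every edge and uses at most two colors. So every connected triple of edges in $H'$ is a sunflower, since otherwise it would be a bicolored member of $\FF$.

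If $H'$ has two edges, they intersect, so $H'$ is a sunflower. Otherwise fix intersecting edges $E,E'$ and set $C=E\cap E'\neq\emptyset$. Let $F$ be an edge meeting $E$ or $E'$. Then $\{E,E',F\}$ is connected, hence a sunflower, so $F\cap E=F\cap E'=C$.

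Now grow the edge set along connectivity. Each new edge $G$ meets some already-processed edge $F$, which meets $E$ in $C$. Hence $\{E,F,G\}$ is a connected triple, hence a sunflower with core $E\cap F=C$. This gives $G\cap E=G\cap F=C$.

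Finally, take any two edges $F,G$ of $H'$. If $G$ meets $E$, the triple $\{E,F,G\}$ is connected, since $F\cap E=C\neq\emptyset$ and $G\cap E=C\neq\emptyset$. It is then a sunflower with core $E\cap F=C$, so $F\cap G=C$. (Every edge does meet $E$ by the previous step.) Thus all pairwise intersections equal $C$, and $H'$ is a sunflower. I expect this step to be the main obstacle, because it needs a careful connectivity induction. The arithmetic below is routine.

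\emph{Parameters.} Every member of $\FF$ has $3$ edges, so $m=3$. For $a$, three distinct $r$-sets span at least $r+1$ vertices. Equality is attained by three distinct $r$-subsets of an $(r+1)$-set. Their pairwise intersections have size $r-1$ but are distinct sets, so they do not form a sunflower. Therefore $a=r+1$, which satisfies the hypothesis $a\geq r+1$ of Theorem~\ref{thm:general_uppbnd}. For $r\geq 3$ we get $t=\min\{3,r\}=3$.

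Substituting into Theorem~\ref{thm:general_uppbnd} gives $2^{3a-2}=2^{3r+1}$, $m^t=3^3$ and exponent $1/(a-2)=1/(r-1)$. Hence $\chi_s^r(d)\leq\chi_\FF^r(d)\leq\lceil(2^{3r+1}3^3(rd)^3)^{1/(r-1)}\rceil$.
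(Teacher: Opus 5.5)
Your proposal is correct and follows the paper's route: you apply Theorem~\ref{thm:general_uppbnd} to the family of connected three-edge non-sunflowers, with $m=3$, $a=r+1$ and $t=\min\{3,r\}=3$ for $r\geq 3$. Your connectivity argument, showing that every bicolored connected subhypergraph is then a sunflower and hence a star, fills in a step that the paper only asserts.
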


Alon, McDiarmid, and Reed~\cite{alon1991acyclic} showed that every graph has an acyclic coloring with $O(\Delta^\frac{4}{3})$ colors.   Recently, there have been some improvements in the constant factor of the upper bound in~\cite{esperet2013acyclic,goncalves:lirmm-01233456,ndreca2012improved}  by using the entropy compression method. We obtain the result below for $r\geq 3$, using Theorem~\ref{thm:general_uppbnd} with
$\FF$ consisting of paths with three edges and all triangles, hence letting $m=3$ and $a=\lceil 3r/2\rceil$ (given by the smallest order triangle). This bound slightly improves the upper bound for $\chi_a^r(d)$ in Corollary~\ref{thm:upperbnd_star}.
\begin{cor}\label{thm:upperbnd_acyclic}
For $r\geq 3$, $\chi_a^r(d)\leq 2^{9r/2-2}3^3 (rd)^{\frac{6}{3r-4}}.$
\end{cor}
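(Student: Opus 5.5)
The plan is to apply Theorem~\ref{thm:general_uppbnd} to a family $\FF$ whose colorings are automatically acyclic colorings, and then simplify the resulting bound. I take $\FF$ to consist of two kinds of $r$-uniform hypergraphs. The first kind is all triangles, i.e.\ cycles $E_0,E_1,E_2$ of length three in the sense of Section~\ref{sec:appstar}. The second kind is all paths with three edges $E_0,E_1,E_2$, meaning $E_0\cap E_1\neq\emptyset$, $E_1\cap E_2\neq\emptyset$ and $E_0\cap E_2=\emptyset$.

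\emph{Step 1: every $\FF$-coloring is an acyclic coloring.} Suppose $C=E_0,\dots,E_{k-1}$ is a bicolored cycle in an $\FF$-coloring.
\begin{itemize}
\item If $k=3$, then $C$ is itself a triangle in $\FF$, which is a contradiction.
\item If $k\ge 4$, consider three consecutive edges $E_0,E_1,E_2$. By the cycle condition, $E_0\cap E_1\neq\emptyset$, $E_1\cap E_2\neq\emptyset$ and $E_0\cap E_1\cap E_2=\emptyset$. If $E_0\cap E_2=\emptyset$, these three edges form a path with three edges. Otherwise they satisfy the definition of a triangle. In either case they form a bicolored member of $\FF$, again a contradiction.
\end{itemize}
Monochromatic hyperedges are excluded in both notions. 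Hence $\chi_a(H)\le\chi_\FF(H)$ for every $H$, and so $\chi_a^r(d)\le\chi_\FF^r(d)$.

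\emph{Step 2: compute the parameters.} Every member of $\FF$ has three edges, so $m=3$.

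For a triangle, the cycle condition $E_{i-1}\cap E_i\cap E_{i+1}=\emptyset$ makes the three pairwise intersections disjoint. Write $x_{ij}=|E_i\cap E_j|$. Disjointness inside each edge gives $x_{01}+x_{02}\le r$, and similarly for the other two edges. Summing these three inequalities gives $x_{01}+x_{02}+x_{12}\le 3r/2$. By inclusion–exclusion (the triple intersection being empty), the order is $3r-(x_{01}+x_{02}+x_{12})\ge\lceil 3r/2\rceil$.

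For a path with three edges, the order is $3r-|E_0\cap E_1|-|E_1\cap E_2|\ge 2r\ge\lceil 3r/2\rceil$.

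Therefore $a=\lceil 3r/2\rceil$. For $r\ge3$ this gives $a\ge5\ge r+1$ only when $r\le 4$, so I will double-check that the hypothesis $a\ge r+1$ of Theorem~\ref{thm:general_uppbnd} holds. It does in general, since $\lceil 3r/2\rceil\ge r+1$ for $r\ge2$. Also $t=\min\{3,a-1\}=3$.

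\emph{Step 3: arithmetic.} Theorem~\ref{thm:general_uppbnd} gives
\[
\chi_a^r(d)\le \Bigl\lceil \bigl(2^{3a-2}3^3(rd)^3\bigr)^{\frac{1}{a-2}}\Bigr\rceil .
\]
I bound the three factors separately, using $a-2\ge (3r-4)/2\ge 5/2$.
\begin{itemize}
\item Since $rd\ge1$, we have $(rd)^{3/(a-2)}\le (rd)^{6/(3r-4)}$.
\item Next, $2^{(3a-2)/(a-2)}=2^{3+4/(a-2)}\le 2^{3+8/5}<2^{5}$.
\item Finally, $27^{1/(a-2)}\le 27$.
\end{itemize}
The ceiling adds at most $1$, which is absorbed by the constant. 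This yields $\chi_a^r(d)\le 2^{6}\,3^3(rd)^{6/(3r-4)}$. Since $9r/2-2\ge 23/2>6$ for $r\ge3$, this is at most $2^{9r/2-2}3^3(rd)^{6/(3r-4)}$.

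The only delicate point is Step 1 together with the order count in Step 2. One must define ``path'' so that every triple of consecutive edges of a long cycle lies in $\FF$, while no member of $\FF$ has order below $\lceil 3r/2\rceil$. Splitting the triples according to whether $E_0\cap E_2$ is empty handles both requirements at once.
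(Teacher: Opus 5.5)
Your proposal is correct and follows the paper's own argument: apply Theorem~\ref{thm:general_uppbnd} to the family of all triangles and all three-edge paths, with $m=t=3$ and $a=\lceil 3r/2\rceil$, which the paper states without details. You also check that consecutive triples of any longer cycle are triangles or three-edge paths, and your bound $2^{3+4/(a-2)}\,27^{1/(a-2)}<2^5 3^3$ even gives the sharper constant $2^6 3^3$; that estimate only needs $a\ge\lceil 3r/2\rceil$, so not showing that the balanced triangle attains this order is harmless.
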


%\subsection{Lower Bound}
The following observation on the star coloring of the complete $r$-uniform hypergraph $K_n^r$ provides a lower bound for $\chi_s^r(d).$
In a star coloring of $K_n^r$, no pair of color classes contains $K_{r+1}^r$ as a subhypergraph as this is not a star. Consider a star coloring of $K_n^r$ 
with $x$ colors and let $V_1$ and $V_2$ be the two largest color classes. 
 By an averaging argument, $|V_1\cup V_2|\geq 2n/x.$ As $V_1\cup V_2$ is a bichromatic set, $2n/x< r+1$, as $r+1$ is the smallest order of a nonstar subgraph. Also, $\Delta = \binom{n-1}{r-1}\leq \left(\frac{en}{r-1}\right)^{r-1}$. These together yield $\frac{2(r-1)\Delta^{\frac{1}{r-1}}}{e(r+1)}<x.$ Thus, 
$\chi_s^r(d)\geq \frac{2(r-1)}{e(r+1)}\Delta^{\frac{1}{r-1}}.$   
Theorem~\ref{thm:lwrbnd} implies the following lower bound for $\chi_s^r(d)$ that 
improves the lower bound above and generalizes   
the result by Fertin et al.~\cite{fertin2004star} for $\chi^{2}_s(d)$.    
\begin{cor}\label{cor:lowerbnd}
For $r\geq 3$, $\chi^{r}_s(d) = \Omega ((d^3/\ln d)^\frac{1}{3r-4})$ and 
$\chi^{r}_a(d) = 
\Omega ((d^3/\ln d)^\frac{1}{3r-4})$
%\left(\frac{3^{r-5}(r-1)^{3r-3}}{r^22^{r+7}e^{3r-3}}\right)}
\end{cor}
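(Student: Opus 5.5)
The plan is to derive Corollary~\ref{cor:lowerbnd} directly from Theorem~\ref{thm:lwrbnd} by exhibiting families $\FF_s$ and $\FF_a$ that contain the balanced triangle and for which every star (resp.\ acyclic) coloring is automatically an $\FF_s$-coloring (resp.\ $\FF_a$-coloring). Then $\chi_s(H)\ge\chi_{\FF_s}(H)$ and $\chi_a(H)\ge\chi_{\FF_a}(H)$ for every $r$-uniform $H$. Taking maxima over $H\subset K_n^r$ with $\Delta(H)=d$ gives $\chi_s^r(d)\ge\chi^r_{\FF_s}(d)$ and $\chi_a^r(d)\ge \chi^r_{\FF_a}(d)$. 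The bound $\Omega((d^3/\ln d)^{1/(3r-4)})$ then transfers from Theorem~\ref{thm:lwrbnd}.

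For the acyclic case I would take $\FF_a$ to be the set of all cycles of length at least three; note that Theorem~\ref{thm:lwrbnd} only requires $\FF$ to consist of connected $r$-uniform hypergraphs containing the balanced triangle. The key check is that the balanced triangle with edges $E_1,E_2,E_3$ is a cycle of length $3$ in the sense of Section~\ref{sec:appstar}. Consecutive edges intersect, since $|E_1\cap E_j|=\lfloor r/2\rfloor\ge 1$ for $r\ge 3$ (indeed for $r\ge2$) and $|E_2\cap E_3|=\lceil r/2\rceil\ge1$. The triple intersection $E_1\cap E_2\cap E_3$ is empty by definition. It is also worth confirming that such a configuration exists as an $r$-uniform hypergraph: let $E_1=A\cup B$ with $|A|=|B|=\lfloor r/2\rfloor$ when $r$ is even (with one extra private vertex when $r$ is odd), $E_2=A\cup C$ and $E_3=B\cup C$ with $|C|=\lceil r/2\rceil$, and check the sizes. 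Every acyclic coloring forbids bicolored cycles of length at least three, so it is an $\FF_a$-coloring.

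For the star case I would take $\FF_s$ to be the connected $r$-uniform subhypergraphs that are not stars, or simply $\FF_s=\{\text{balanced triangle}\}$. The check is that the balanced triangle is not a star, because $\bigcap_{i=1}^3 E_i=\emptyset$. A star coloring allows only stars to be bicolored, so a bicolored balanced triangle is forbidden, and the coloring is an $\FF_s$-coloring. One could alternatively obtain the star bound from $\chi_a^r(d)\le\chi_s^r(d)$ once the acyclic bound is established.

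The main obstacle is the interpretive step: making sure the forbidden family in each notion genuinely contains the balanced triangle as a subhypergraph in the sense used by Theorem~\ref{thm:lwrbnd}. Concretely, this means a bicolored copy of it is forbidden, and the induced/non-induced reading of ``bicolored subhypergraph'' matches. Once that is settled, the corollary is immediate monotonicity.
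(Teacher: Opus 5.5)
Your proposal is correct and matches the paper's argument. The paper simply reads the corollary off Theorem~\ref{thm:lwrbnd}: the balanced triangle has pairwise intersecting edges and an empty triple intersection, so it is both a non-star and a cycle of length three. Hence every star or acyclic coloring is an $\FF$-coloring for a family $\FF$ containing it, and $\chi_s^r(d),\chi_a^r(d)\ge\chi^r_{\FF}(d)$. Your explicit construction of the balanced triangle and the monotonicity step only spell out what the paper leaves implicit; its proof of Theorem~\ref{thm:lwrbnd} is already phrased directly in terms of $\chi_s$.
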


%Alon, McDiarmid, and Reed~\cite{alon1991acyclic} proved that there exist graphs with maximum degree $\Delta$ that have acyclic coloring using $\Omega((\Delta^{\frac{4}{3}})/(log \Delta)^\frac{1}{3})$ colors. 

\section{Main Results}\label{sec:proofs}

%\subsection{Preliminaries} 

In our proofs, we use the following version of the Lov\'asz Local Lemma~\cite{erdHos1973problems}. 
\begin{theorem}[General Lov\'asz Local Lemma] \label{GeneralLovaszLocal} \cite{erdHos1973problems}
 Suppose that $G=(V,E)$ is a dependency graph for the events $A_1,A_2,...,A_n$ and suppose there are real numbers $y_1,y_2,...,y_n$ such that $0 \leq y_i < 1$ and 
\begin{equation}\label{eq:LLL}
 Pr[A_i] \leq y_i \prod_{(i,j) \in E} (1-y_j)
 \qquad \text{for all $1 \leq i \leq n$.}
 \end{equation}
 Then $Pr[\bigwedge_{i=1}^n \Bar{A_i}] \geq \prod_{i=1}^n (1-y_i)$. In particular, with positive probability no event $A_i$ holds.
\end{theorem}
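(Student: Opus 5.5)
The plan is the standard inductive proof of the Lov\'asz Local Lemma. The key intermediate claim is: for every index $i$ and every set $S\subseteq\{1,\dots,n\}\setminus\{i\}$ with $\Pr[\bigwedge_{j\in S}\bar A_j]>0$,
\begin{equation*}
\Pr\Bigl[A_i \,\Big|\, \bigwedge_{j\in S}\bar A_j\Bigr]\leq y_i .
\end{equation*}
I will prove this by induction on $|S|$, with positivity of the conditioning events handled inside the same induction. Given the claim, the conclusion follows from the chain rule:
\begin{equation*}
\Pr\Bigl[\bigwedge_{i=1}^n\bar A_i\Bigr]=\prod_{i=1}^n\Pr\Bigl[\bar A_i\,\Big|\,\bigwedge_{j<i}\bar A_j\Bigr]\geq\prod_{i=1}^n(1-y_i).
\end{equation*}
This product is positive because each $y_i<1$. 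The same chain-rule computation applied to any prefix shows that every conditioning event $\bigwedge_{j\in S}\bar A_j$ used along the way has positive probability, so all conditional probabilities are well defined.

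\textbf{Base case and split.} For $S=\emptyset$, the claim is $\Pr[A_i]\leq y_i\prod_{(i,j)\in E}(1-y_j)\leq y_i$, which is hypothesis \eqref{eq:LLL}. For the inductive step, split $S=S_1\cup S_2$, where $S_1=\{j\in S:(i,j)\in E\}$ are the neighbours of $i$ in the dependency graph and $S_2=S\setminus S_1$. If $S_1=\emptyset$, then $A_i$ is mutually independent of $\{A_j:j\in S_2\}$, so the conditional probability equals $\Pr[A_i]\leq y_i$. Otherwise write
\begin{equation*}
\Pr\Bigl[A_i\,\Big|\,\bigwedge_{S}\bar A_j\Bigr]=\frac{\Pr\bigl[A_i\wedge\bigwedge_{S_1}\bar A_j\,\big|\,\bigwedge_{S_2}\bar A_j\bigr]}{\Pr\bigl[\bigwedge_{S_1}\bar A_j\,\big|\,\bigwedge_{S_2}\bar A_j\bigr]}.
\end{equation*}

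\textbf{Numerator.} Drop the $S_1$-events and use mutual independence of $A_i$ from the events indexed by $S_2$. This bounds the numerator by $\Pr[A_i]\leq y_i\prod_{(i,j)\in E}(1-y_j)$.

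\textbf{Denominator.} This is the step needing care. Order $S_1=\{j_1,\dots,j_k\}$ and expand by the chain rule as
\begin{equation*}
\prod_{\ell=1}^k\Bigl(1-\Pr\bigl[A_{j_\ell}\,\big|\,\bar A_{j_1}\wedge\cdots\wedge\bar A_{j_{\ell-1}}\wedge\textstyle\bigwedge_{S_2}\bar A_j\bigr]\Bigr).
\end{equation*}
Each conditioning set here is a subset of $S\setminus\{j_\ell\}$ of size strictly less than $|S|$ (it omits at least $j_\ell$ itself). So the induction hypothesis, applied to index $j_\ell$, bounds each conditional probability by $y_{j_\ell}$. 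Hence the denominator is at least $\prod_{j\in S_1}(1-y_j)\geq\prod_{(i,j)\in E}(1-y_j)$, since all factors lie in $(0,1]$.

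Dividing the two bounds gives exactly $y_i$, which completes the induction. The only real subtlety is to run the induction over all pairs $(i,S)$ simultaneously, so that the hypothesis can be applied to the indices $j_\ell\neq i$. This is why the claim is stated for every $i$ at once.
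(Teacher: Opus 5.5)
The paper gives no proof of this statement; it quotes the Local Lemma from Erdős–Lovász and uses it as a black box in the proof of Theorem~\ref{thm:general_uppbnd}. Your argument is the standard inductive proof and it is correct: strong induction on $|S|$ for the claim $\Pr[A_i\mid\bigwedge_{j\in S}\bar A_j]\le y_i$, splitting $S$ into neighbours and non-neighbours of $i$, then the chain rule. The one point worth stating explicitly is that each set $T=\{j_1,\dots,j_{\ell-1}\}\cup S_2$ in the denominator step is a subset of $S$. Hence $\Pr[\bigwedge_{j\in T}\bar A_j]\ge\Pr[\bigwedge_{j\in S}\bar A_j]>0$, so the induction hypothesis really does apply to $(j_\ell,T)$.
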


We also use the following tail inequality for the binary distribution 
$\mathrm{Bin}(n,p).$

\begin{lemma}[Chernoff bound, Eq.2.5 in \cite{janson2011random}]
\label{lem:chernoff}
Let $X\sim\mathrm{Bin}(n,p)$. For every $t\ge0$,
\begin{equation}\label{eq:chernoff}
  \Pr\bigl[X \ge np + t\bigr]
  \;\le\;
  \exp \Bigl(-\frac{t^2}{2\,(np + t/3)}\Bigr).
\end{equation}
\end{lemma}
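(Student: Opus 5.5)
The approach is the standard exponential-moment (Chernoff) method, followed by an elementary inequality that converts the exact rate function into the stated Bernstein-type form. The degenerate cases are immediate. If $t=0$ the right-hand side equals $1$. If $p=0$ then $X=0$ almost surely, so $\Pr[X\ge t]=0$ for $t>0$. Hence assume $t>0$ and $\mu:=np>0$.

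\textbf{Step 1 (moment generating function).} For any $\lambda>0$, Markov's inequality applied to $e^{\lambda X}$ gives
\[
\Pr[X\ge \mu+t]\le e^{-\lambda(\mu+t)}\,\mathbb{E}[e^{\lambda X}].
\]
Write $X$ as a sum of $n$ independent Bernoulli$(p)$ variables. Then
\[
\mathbb{E}[e^{\lambda X}]=(1+p(e^\lambda-1))^n\le \exp\bigl(\mu(e^\lambda-1)\bigr),
\]
using $1+y\le e^y$.

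\textbf{Step 2 (optimize $\lambda$).} Choose $\lambda=\ln(1+t/\mu)$, which is positive. Substituting gives
\[
\Pr[X\ge \mu+t]\le \exp\bigl(-\mu\,\varphi(t/\mu)\bigr),\qquad \varphi(x):=(1+x)\ln(1+x)-x .
\]

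\textbf{Step 3 (the real work).} It remains to show that for all $x\ge 0$,
\[
\varphi(x)\ge g(x):=\frac{x^2}{2(1+x/3)}=\frac{3x^2}{2(3+x)}.
\]
With $x=t/\mu$ this yields $\mu\varphi(t/\mu)\ge t^2/(2(\mu+t/3))$, which is exactly \eqref{eq:chernoff}.

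To prove the inequality, set $f=\varphi-g$. One checks directly that $f(0)=0$ and $f'(0)=0$, since $\varphi'(x)=\ln(1+x)$ and $g'(0)=0$. It therefore suffices to show $f''\ge 0$ on $[0,\infty)$, because then $f'$ is nondecreasing, so $f'\ge 0$, and hence $f\ge 0$.

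For the second derivatives, $\varphi''(x)=1/(1+x)$. For $g$, write $g(x)=\tfrac32\bigl(x-3+\tfrac{9}{3+x}\bigr)$. Then $g''(x)=27/(3+x)^3$. So the condition $f''\ge0$ reduces to $(3+x)^3\ge 27(1+x)$. Expanding, this reads
\[
27+27x+9x^2+x^3\ge 27+27x,
\]
which clearly holds for $x\ge0$.

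I expect this comparison of $\varphi$ with the rational function $g$ to be the only step requiring care. The decomposition of $g$ above makes the second derivative clean, so the convexity argument should go through without difficulty. Steps 1 and 2 are routine.
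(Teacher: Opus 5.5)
Your proof is correct and complete. The paper does not prove this lemma; it only cites Janson--{\L}uczak--Ruci\'nski. Your argument is the standard one given there: the exponential-moment bound $\Pr[X\ge\mu+t]\le\exp(-\mu\varphi(t/\mu))$, followed by the inequality $\varphi(x)\ge x^2/(2(1+x/3))$, which you verify from $\varphi(0)=\varphi'(0)=0$ together with $\varphi''(x)=(1+x)^{-1}\ge(1+x/3)^{-3}=g''(x)$ for $x\ge 0$.
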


\subsection{Proof of Theorem \ref{thm:general_uppbnd}}

%\begin{proof}[Proof of Theorem~\ref{thm:general_uppbnd}]
Consider an $r$-uniform hypergraph $H=(V,E)$ with maximum degree $\Delta.$ 
Let $x= \lceil {( 2^{3a-2}m^t (r\Delta)^m)}^\frac{1}{a-2} \rceil$ with $t=\min\{m, a-1\}$, and let us color $V$ by assigning each vertex a color chosen uniformly at random from $C = \{1, 2, 3, \ldots, x\}$. We show that with nonzero probability, a coloring of $H$ exists that has none of the bad events described below:    
\begin{itemize}
    \item Type 1: For a given edge $e\in E$, bad event $A_{(M,e)}$ is the event that $e$ is monochromatic. 
    \item Type 2: For any $P\in \FF$, $A_{(P)}$ is the event that $P$ is bicolored or monochromatic.
\end{itemize}
Let $D$ be the dependency graph, where the bad events above comprise the vertex set and the edge set consists of the intersecting pairs of bad events. 

%\begin{enumerate}[i)]    
%    \item $A_{(M,e_1)}$ and $A_{(M,e_2)}$, if $e_1\cap e_2 \neq \emptyset$;
%    \item $A_{(M,e)}$ and $A_{(P)}$, if $e \cap P \neq \emptyset$;
%    \item $A_{(P_1)}$ and $A_{(P_2)}$, if $P_1 \cap P_2 \neq \emptyset$.
%\end{enumerate}
 
For any $P\in \FF$ with edges $e_1,\dots,e_i$, 
a vertex $v$ appears in at most $i\Delta(r\Delta)^{i-1}=\frac{i}{r}(r\Delta)^i$ copies of $P.$ As $m$ is the maximum number of edges 
observed on a member in $\FF$, the number of members of $\FF$ that contains $v$ is at most 
\[
\sum_{i=2}^m \frac{i}{r} (r\Delta)^i \leq \frac{r\Delta}{r}\frac{(1-(m+1)(r\Delta)^m+m(r\Delta)^{m+1})}{(1-r\Delta)^2} \leq 
\frac{4m(r\Delta)^{m+2}}{r (r\Delta)^2} = \frac{4m}{r} (r\Delta)^m
\]
Thus, for a vertex of type \( i \), an upper bound for the number of its neighbors of type \( j \), denoted by \(D_{i,j}\),  are listed  in row $i$ and column $j$ of the following table:
    \begin{table}[h!]
\centering
\begin{tabular}{|c|c|c|}
\hline
 & 1 & 2 \\
\hline
1 & $r\Delta$ & $4m(r\Delta)^m$\\
\hline
2 & $mr\Delta$ & $4m^2(r\Delta)^m$ \\
\hline
\end{tabular}
\end{table}

We have the following upper bounds on the probabilities of bad events: 
\begin{itemize}
    \item For each event of Type 1, $A_{(M, e)}$, 
    $Pr[A_{(M,e)}] = \frac{1}{x^{r-1}},$
    \item For each event of Type 2, $A_{(P)}$, 
    $Pr[A_{(P)}] \leq \frac{{x\choose 2}2^{|V(P)|}}{x^{|V(P)|}}
    \leq 2 {\left( \frac{2}{x} \right)}^{a-2},$
\end{itemize}    
where $a=\min_{F\in \FF}|V(F)|$. 
In order to apply the Lov\'asz Local Lemma, we let $y_1=2Pr[A_{(M,e)}]$ and $y_2=2Pr[A_{(P)}]$ for all events $A_{(M,e)}$ and $A_{(P)}$. %We observe that these positive values for $y_1$, $y_2$ are not greater than 1, satisfying the assumption in Lovasz Local Lemma. 
Lovasz Local Lemma requires that: 
\begin{equation}\label{eq:y_m_requirement}
    \mathbb{P}[A_{(M,e)}] \leq y_1 (1 - y_1)^{r\Delta} (1 - y_2)^{4m (r\Delta)^m}
\end{equation}
\begin{equation}\label{eq:y_i_requirement}
\mathbb{P}[A_{(P)}] \leq y_2 (1 - y_1)^{mr\Delta} (1 - y_2)^{4m^2 (r\Delta)^m}
\end{equation}
where \eqref{eq:y_i_requirement} implies \eqref{eq:y_m_requirement}. 
Thus, we need to show that 
\begin{equation}\label{eq:subs_yi}
\frac{1}{2} \leq \left( 1 - \frac{2}{x^{r-1}} \right)^{m r \Delta} 
{\left(1 - 4{\left( \frac{2}{x} \right)}^{a-2}\right)}^{4m^2 (r\Delta)^m}
\end{equation}

We observe that the parantheses on the right in \eqref{eq:subs_yi} are nonnegative. So, we use Bernoulli's inequality:
$(1 + t)^n \geq 1 + n t,$ $\forall t \geq -1, \, \forall n > 1.$ Thus, showing the following yields~\eqref{eq:subs_yi}.
\begin{equation}\label{final-result}
\frac{1}{2} \leq \left(1 - \frac{2m r \Delta }{x^{r-1}}\right) \cdot  \left(1 - 16 m^2 (r\Delta)^m  {\left( \frac{2}{x} \right)}^{a-2}   \right) 
\end{equation}

Below, we substitute  $x=\lceil {( 2^{3a-2}m^t (r\Delta)^m)}^\frac{1}{a-2}\rceil$ with $t=\min\{m, a-1\}$.
Since each $F\in \FF$ is a connected subhypergraph 
$a\leq r+(m-1)(r-1)= m(r-1)+1$ and $m^{t\frac{r-1}{a-2}}>m.$ 
This, together with $a\geq r+1\geq 3$ and $m\geq 2$ yield 
\begin{equation}\label{eq:y_M-upper-bound-2}
  \frac{2m r \Delta }{x^{r-1}} \leq  
  \frac{2m r \Delta}{(2^{3a-2}m^t (r\Delta)^m)^\frac{r-1}{a-2}} \leq 
  \frac{2}{2^{3r-3}}\frac{m}{m^{t\frac{r-1}{a-2}}} 
  \frac{r \Delta}{(r\Delta)^{m\frac{r-1}{a-2}}}\leq 
  \frac{1}{4}  
\end{equation}
and
\begin{equation}\label{eq:y_P-upper-bound}
16 m^2 (r\Delta)^m  {\left( \frac{2}{x} \right)}^{a-2} \leq 
\frac{2^{a+2} m^2 (r\Delta)^m}{2^{3a-2}m^t (r\Delta)^m} 
    \leq \frac{1}{4}
\end{equation}
These show that \eqref{final-result} holds. The constants in $x$ can be further optimized. 
%\end{proof}

%\section{Lower Bound}\label{sec:lowerbound}

\subsection{Proof of Theorem \ref{thm:lwrbnd}}\label{sec:lowerbound}

%We show a general bound on star chromatic number for all hypergraphs. 
With Lemma \ref{lem_linearpath}, we aim to show that with high probability, there is a bichromatic balanced triangle in Erd{\H o}s-Renyi hypergraph $G^r(n,p),$ where each $r$-subset of $n$ vertices is a hyperedge  with probability $p.$

\begin{lemma}\label{lem_linearpath}
Let $p$ be a real number satisfying 
${\left(\frac{r^2\ln(n)}{n} \cdot \frac{25\cdot 2^{r+6}}{3^{r-1}}\right)}^\frac{1}{3} \leq p \leq 1.$ 
Then, asymptotically almost surely and uniformly over $p$, any coloring of $G^r(n,p)$ with $x\leq \frac{n}{r}$ colors  
contains a bichromatic balanced triangle. 
\end{lemma}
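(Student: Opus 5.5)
The plan is to take a union bound over all colorings, combined with a disjointness argument that makes the relevant edge events independent.

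\emph{Structure of a balanced triangle.} Write a balanced triangle as $E_1=A\cup B$, $E_2=A\cup C$, $E_3=B\cup C$ with $A,B,C$ pairwise disjoint, $|A|=|B|=\lfloor r/2\rfloor$ and $|C|=\lceil r/2\rceil$. It is bichromatic exactly when the vertex set $A\cup B\cup C$ uses at most two colors. Note also that if two candidate triangles share no $r$-set among their three edges, then the events ``all three edges are present in $G^r(n,p)$'' are independent, each with probability $p^3$.

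\emph{Step 1 (deterministic).} Fix an arbitrary coloring $c\colon[n]\to[x]$ with $x\le n/r$. Since the average class size is at least $r$, the following procedure gives many two-colored vertex sets of size at least $\lfloor 3r/2\rfloor$:
\begin{itemize}
\item group the color classes into pairs, or take large classes on their own;
\item this gives disjoint sets $U_1,\dots,U_k$, each using at most two colors, with $\sum_j|U_j|\ge n/2$ and every $|U_j|$ large enough.
\end{itemize}
Inside each $U_j$, I would pick a family of candidate balanced triangles, pairwise sharing no edge, by a greedy or design-type argument. The target is a family $\mathcal T_c$ with $|\mathcal T_c|\ge K:=\gamma_r n^2/r^2$, where $\gamma_r$ is of order $3^{r-1}/2^{r+6}$. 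Combinatorially, this comes from choosing the partition $A,B,C$ of a $\lfloor 3r/2\rfloor$-set, whose counting produces the $3^{r-1}$ and $2^{r}$ factors. The requirement that no two chosen triangles share an $r$-set is handled as follows. Each $r$-subset lies in only a bounded number of triangles of a given $U_j$, relative to the total count, so a greedy choice loses only a factor depending on $r$. Convexity, $\sum_j |U_j|^2\ge (\sum_j|U_j|)^2/k$, then turns the per-set bounds into a quadratic bound in $n$.

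\emph{Step 2 (probabilistic).} For the fixed coloring $c$, independence gives
\[
\Pr[\text{no bichromatic balanced triangle}]\le(1-p^3)^{K}\le \exp(-p^3K).
\]
By the hypothesis on $p$, we have $p^3K\ge 25\cdot 2^{r+6}3^{-(r-1)}\,r^2\ln n\cdot n^{-1}\cdot\gamma_r n^2/r^2 \ge 2n\ln n$ once $\gamma_r$ is as in Step 1. There are at most $x^n\le n^n=e^{n\ln n}$ colorings. The union bound therefore gives a failure probability of at most $e^{-n\ln n}\to0$. This bound is uniform in $p$, because only the lower bound on $p$ was used.

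The Chernoff bound (Lemma~\ref{lem:chernoff}) is probably not needed here. I expect it to enter later, when Theorem~\ref{thm:lwrbnd} is derived by bounding the maximum degree of $G^r(n,p)$ by about $\binom{n-1}{r-1}p$.

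\emph{Main obstacle.} The main obstacle is Step 1: one needs an edge-disjoint family of size $\Omega(n^2)$ for every coloring, with constants good enough to match the $n\ln n$ entropy of the colorings. If the greedy edge-disjointness loses too much, the first fallback is to avoid full independence. Instead I would apply Janson's inequality to all candidate triangles in $\bigcup_j U_j$. For this one must check that the overlap term $\bar\Delta$, which is dominated by pairs of triangles sharing an edge, is at most comparable to the mean $\mu\approx p^3 n^{\lfloor 3r/2\rfloor}$. That check should again follow from the lower bound on $p$.
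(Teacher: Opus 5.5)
There is a genuine gap in Step~1, exactly where the quadratic count is needed. Your sets $U_1,\dots,U_k$ are pairwise disjoint and each uses at most two colors, so convexity gives only $\sum_j|U_j|^2\ge n^2/(4k)$. This is quadratic in $n$ only when $k$ is bounded, but $k$ can be of order $n/r$.

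Concretely, take the coloring with $x=n/r$ classes of exactly $r$ vertices each. Every $U_j$ has at most $2r$ vertices and so contains $O_r(1)$ balanced triangles. Hence $|\mathcal T_c|=O_r(n)$, however you choose inside the $U_j$'s. At the threshold value of $p$ this gives $p^3|\mathcal T_c|=O_r(\ln n)$, which cannot beat the $x^n=e^{n\ln(n/r)}$ colorings in the union bound.

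The Janson fallback fails for the same reason. Your estimate $\mu\approx p^3n^{\lfloor 3r/2\rfloor}$ presupposes a two-colored set of linear size, which need not exist. For this coloring there are only $O_r(n)$ candidate triangles inside the $U_j$'s. By Harris' inequality (again at the threshold $p$), the probability that none of them is present is at least $(1-p^3)^{O_r(n)}=n^{-O_r(1)}$, far above $n^{-n}$.

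The missing idea is to use \emph{overlapping} two-colored vertex sets, one for each pair of small pieces of color classes, while keeping the edges distinct. The paper proceeds as follows.
\begin{itemize}
\item It cuts every color class into chunks $X_1,\dots,X_u$ of size $a$ and $Y_1,\dots,Y_u$ of size $b$. Here $a+b$ is the order of a balanced triangle and $a,b\approx 3r/4$.
\item This discards at most about $(x+1)(3r+1)/4$ leftover vertices; this is where $x\le n/r$ enters.
\item It then takes \emph{all} $s=u^2=\Omega(n^2/r^2)$ sets $X_j\cup Y_k$. Each of these is two-colored.
\item An $r$-set meeting both $X_j$ and $Y_k$ determines $(j,k)$. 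So as long as the triangles chosen inside $X_j\cup Y_k$ use only such $r$-sets (automatic when $a,b<r$), triangles from different pairs never share an edge.
\end{itemize}
Taking $t\ge\frac{1}{12}(3/2)^r$ edge-disjoint balanced triangles in each $X_j\cup Y_k$ yields $st\ge\frac{3^{r-1}}{25\cdot 2^{r+5}}\cdot\frac{n^2}{r^2}$ independent events. This is precisely what the hypothesis on $p$ is calibrated for, and your Step~2 then goes through unchanged.

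A smaller point: for odd $r$ your description of the triangle is off, since $|A\cup B|=r-1$. The edge $E_1$ has one further private vertex, and the triangle has $(3r+1)/2$ vertices rather than $\lfloor 3r/2\rfloor$.
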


\begin{proof}
Considering a particular coloring of $G^r{(n,p)}$ with $x$ colors, from all possible $x^n$ colorings, we show that the probability of the event $E$ of $G^r(n,p)$ not having a bichromatic balanced triangle is $o(n^{-n}).$ 
Let $V_1,\dots, V_x$ denote color classes. 
We partition each $V_i$, $1\leq i\leq x$ into equal number of parts of order $a$ and $b$, where $(a,b)$ have value $(\lfloor 3r/4 \rfloor, \lceil 3r/4 \rceil)$ and 
$(\lfloor (3r+1)/4 \rfloor, \lceil (3r+1)/4 \rceil)$, when $r$ is even and odd, respectively, and call the leftover $L_i$. 
A balanced triangle has order $a+b$, that is $3r/2$ and $(3r+1)/2$, when $r$ is even and odd, respectively. 
Let $X_1,\dots, X_{u'}$ and $Y_1,\dots, Y_{u'}$ be the parts of order $a$ and $b$, respectively, obtained from this partition. 
%Next, we remove some more pairings from these leftovers so that as many vertices as possible from each $V_i$ are used in these pairings of order $(a,b).$ 
Then, we do the same for the leftovers, by removing possibly a part of order $a$ or $b$ from the $L_i$'s, $1\leq i\leq x$, call these parts $X_{u'+1},\dots, X_u$ and $Y_{u'+1},\dots, Y_u$, respectively. 
Next, we aim 
to construct as many edge-disjoint balanced triangles as possible on the vertex subsets $X_j\cup Y_k$, $1\leq j,k\leq u.$ 

 Let $s=u^2$, that is the number of pairings of the parts of order $a$ and $b$, as described above.  Let $t$ be the maximum possible number of edge-disjoint balanced triangles on $a+b$ vertices. 
Let $E'$ be the event of $G^r(n,p)$ not having any bicolored or monochromatic balanced triangle induced by the vertex sets $X_j\cup Y_k$, $1\leq j,k\leq u.$ 
Let $E$ be the event of $G^r(n,p)$ not having any bicolored or monochromatic balanced  triangle. Since $E'\subseteq E$, we have 
\begin{equation}\label{eqn:event_bound}
    \Pr(E) \leq \Pr(E') \leq (1 - p^3)^{st}.
\end{equation}
In the following, we show a lower bound for $s$ and $t,$ respectively.

With the process below, we remove some more vertices from each $L_i$ to use in the partition to minimize the amount of leftover vertices.  
We group $L_1,\dots,L_x$ into three parts: 
$B:=\{i: |L_i|\geq b\}$, 
$A:=\{i: |L_i|=a\}$, 
$R:=\{i: |L_i|< a\}$. 
First, assume that $r$ is odd, hence $a\neq b.$
All vertices in $L_i$ with $i\in R$ are removed. 
If $|B|> |A|$, then we move some elements of $B$ to $A$ so 
that $|A|=|B|$ or $|B|=|A|+1.$ 
If $|B|<|A|$ ($|B|\geq |A|$, resp.), 
then we remove $|B|$ ($|A|$, resp.) many parts with order $a$ and $b$ 
from each $L_i$ and $L_j$, $i\in A$ and $j\in B$, respectively, 
and include these parts into the partition above. 
This yields a partition with equal number of parts 
of order $a$ and $b$. 
We remove the leftover from each $L_i$, which results 
in removing at most $a$ vertices from each $L_i.$ 
Only in the case, $|B|=|A|+1$, there is a single $L_i$ with 
$i\in B$ that has no contribution to the partition.  
It is possible that upto $2a$ vertices are removed from this 
particular $L_i.$ 
If $r$ is even, then $a=b$ and $A\subseteq B.$ In this case, let 
$B'=B-A$, $A'=A$ and repeat the same procedure above for $B'$ and $A'$.

The above procedure, removes at most $a$ vertices from each color class, except possibly one color class, from which upto $2a$ vertices may be removed. Hence, the total number of vertices removed is at most $(x+1)(3r+1)/4.$ 
%Let $b$ be the number of pairings of these sets with union $3r/2$ vertices. 
Thus,    
\begin{equation}\label{eqn:triangle_bound}
    s \geq \left(\frac{n-\frac{(x+1)(3r+1)}{4}}{\frac{3r+1}{2}}\right)^2 \geq  %\left(\frac{1}{2}\frac{4n}{3r+1}-(\frac{n}{r}+1)\right)^2 \geq  
    \left(\frac{1}{2}\frac{n(r-1)}{r(3r+1)}-1\right)^2 \geq
    \frac{n^2}{200r^2} \quad \text{for} \quad x \leq \frac{n}{r},
\end{equation}
as $(r-1)/(3r+1)\geq \frac{1}{7}$ for $r\geq 2.$ (Note that, we obtain a higher lower bound for even values of $r$, the bound above is for general $r$.)

To find a lower bound on $t$, we consider the number of edge-disjoint balanced triangles on a set of $a+b$ vertices. 
If $r$ is even, the total number of balanced triangles on $3r/2$ vertices is  $\frac{1}{3!}\binom{3r/2}{r}\binom{r}{r/2}$ and each edge appears in exactly $\frac{1}{2}\binom{r}{r/2}$ triangles. Thus, 
 \begin{equation}\label{eqn:lower_bound_aeven}
    t \geq \frac{\frac{1}{6} \binom{3r/2}{r} \binom{r}{r/2}}{\frac{3}{2} \binom{r}{r/2} } = 
    %\frac{1}{9} \frac{\binom{3r/2}{r} \binom{r}{r/2}}{\binom{r}{r/2}} = 
    \frac{1}{9} \binom{3r/2}{r} \geq \frac{1}{9} \cdot \left(\frac{3}{2}\right)^r.
\end{equation}
If $r$ is odd, not every edge in the balanced triangle has the same role. 
Thus, the total number of balanced triangles on $(3r+1)/2$ vertices is  $\frac{r+1}{4}\binom{\frac{3r+1}{2}}{r}\binom{r}{\frac{r+1}{2}}$ and 
each edge appears in exactly 
%$\binom{r}{(r-1)/2}\frac{r+1}{2}+\binom{r}{(r+1)/2}\frac{r+1}{2}$
$(r+1)\binom{r}{(r+1)/2}$ triangles. Thus, 
 \begin{equation}\label{eqn:lower_bound_aodd}
    t \geq 
    \frac{\frac{1}{2}\binom{(3r+1)/2}{r}\binom{r}{(r+1)/2}\frac{r+1}{2}}
    {3(r+1)\binom{r}{(r+1)/2}} = 
    %\frac{1}{9} \frac{\binom{3r/2}{r} \binom{r}{r/2}}{\binom{r}{r/2}} = 
    \frac{1}{12} \binom{(3r+1)/2}{r} \geq 
    \frac{1}{12} \cdot \left(\frac{3}{2}\right)^r.
\end{equation}
So, we use the bound in \eqref{eqn:lower_bound_aodd} for general $r$ as below. 
Following from \eqref{eqn:event_bound}, and substituting lower bounds \eqref{eqn:triangle_bound} and \eqref{eqn:lower_bound_aodd} for $s$ and $t$, we have:
\begin{multline*}
    Pr(E')\leq (1 - p^3)^{st} \leq e^{-p^3st} \leq \exp\left\{-p^3  \frac{n^2}{200r^2}  \frac{1}{12} {\left(\frac{3}{2}\right)}^r\right\} \leq \\
    \exp\left\{-p^3 \frac{n^2}{r^2} \frac{3^{r-1}}{25\cdot 2^{r+5}}\right\} 
    %\leq \exp\left\{-2n\ln(n)\right\} 
    = o(n^{-n}) \text{ for $p \geq {\left(\frac{r^2\ln(n)}{n} \cdot \frac{25\cdot 2^{r+6}}{3^{r-1}}\right)}^\frac{1}{3}$}.
\end{multline*}
\end{proof}

%\begin{proof}[Proof of Theorem \ref{thm:lwrbnd}]

We let $D = \binom{n-1}{r-1}$ denote the maximum possible degree of a vertex in $G^r(n,p).$ We choose $n$ sufficiently large with respect to $d$ as:
\begin{equation}\label{d_bound}
    a_r\frac{\ln{n}}{n}D^3 \leq d^3 \leq 2a_r \frac{\ln n}{n} D^3,
\qquad \text{where} \quad a_r = \frac{25\cdot 2^{r+9}r^2}{3^{r-1}}.
\end{equation}

Let $p=\frac{d-s}{D},$ where $s=\sqrt{2d\ln{(Dn)}}.$ By Chernoff Bound in \eqref{eq:chernoff} and the substitution for $s$ and $p$, we observe the following for $\Delta(G^r(n,p))$. 
\begin{equation}\label{eq:delta-bound}
\begin{split}
       \Pr(\Delta(G^r(n,p)) > d) &\leq n \Pr(\text{BIN}(D,p) \geq d) \\
  & \leq  n \exp \left( -\frac{s^2}{2d} \right) \leq 
  \left(\frac{r-1}{n-1}\right)^{r-1}. 
\end{split}
\end{equation}
To have the proper range for $p=\frac{d-\sqrt{2d\ln{(Dn)}}}{D}$, 
we assume in accordance with \eqref{d_bound} that $d$ is large enough such that $p^3D^3\geq (\frac{d}{2})^3.$ This ensures that $p$ satisfies the requirement in Lemma \ref{lem_linearpath} as follows, where the last inequality follows by \eqref{d_bound}.  
\[
p^3 \geq (\frac{d}{2D})^3 \geq \frac{25\cdot 2^{r+6}r^2}{3^{r-1}}\frac{\ln n}{n}.
\]
By Lemma \ref{lem_linearpath}, we can assume the following for sufficiently large $d.$ 
\[
\Pr\left(\chi_s(G^r(n, p)) \geq x\right) \geq \frac{3}{4},
\]
This, together with \eqref{eq:delta-bound} gives:
\begin{equation}%\label{intersection} --- no reference to this ----
  \Pr\left(\chi_s(G^r(n, p)) \geq x, \Delta(G^r(n,p)) \leq d\right) \geq \frac{3}{4} - \left(\frac{r-1}{n-1}\right)^{r-1} > 0. 
\end{equation}
This ensures the existence of a random graph $G=G^r(n,p)$ with $\chi_s(G)\geq x$ and maximum degree at most $d.$ 
By our assumption on $x$ in Lemma \ref{lem_linearpath},  
$\chi_s^r(d) \geq x \geq \frac{n}{r}.$ 
By \eqref{d_bound}, we have 
\[
d^3 \leq \frac{25\cdot 2^{r+10}r^2}{3^{r-1}} \frac{\ln n}{n} D^3 \leq 
\frac{25\cdot 2^{r+10}r^2e^{3r-3}}{3^{r-1}(r-1)^{3r-3}}n^{3r-4}\ln n.
\]
Moreover, by the lower bound on $d$ in \eqref{d_bound}, we have 
$\ln n \leq \frac{3}{3r-4}\ln d\leq \frac{3}{2}\ln d$ for $r\geq 2$ and 
sufficiently large $n.$ These together yield 
\[
\chi_s^r(d) \geq x \geq \frac{n}{r} \geq 
\left(\frac{3^{r-2}(r-1)^{3r-3}}{r^{3r-2}25\cdot 2^{r+9}e^{3r-3}}\frac{d^3}{\ln d}\right)^{\frac{1}{3r-4}}
\]
%\end{proof}

\section{Concluding Remarks}

Lemma \ref{lem_linearpath} can be generalized to any $r$-uniform subhypergraph with at most $2r-2$ vertices besides the balanced triangle. Thus, 
the method used for proving the lower bound in Theorem~\ref{thm:lwrbnd} can be applied for any coloring forbidding bicolored (and monochromatic) members of any family $\FF$ that contains a member with at most $2r-2$ vertices. 

There is a large gap in the exponent of $d$ observed in the upper and lower bound for $\chi_s^r(d)$ and $\chi_a^r(d)$ in Section~\ref{sec:appstar}. We suspect that the exact value of these parameters is closer to the lower bound in Corollary~\ref{cor:lowerbnd}.  
%% Loading bibliography style file
\bibliographystyle{plain} 

% Loading bibliography database
\bibliography{submission_arxiv2026}

@article{bohman2010coloring,
  title={Coloring H-free hypergraphs},
  author={Bohman, Tom and Frieze, Alan and Mubayi, Dhruv},
  journal={Random Structures \& Algorithms},
  volume={36},
  number={1},
  pages={11--25},
  year={2010},
  publisher={Wiley Online Library}
}

@article{cooper2015list,
  title={List coloring triangle-free hypergraphs},
  author={Cooper, Jeff and Mubayi, Dhruv},
  journal={Random Structures \& Algorithms},
  volume={47},
  number={3},
  pages={487--519},
  year={2015},
  publisher={Wiley Online Library}
}

@article{cooper2016coloring,
  title={Coloring sparse hypergraphs},
  author={Cooper, Jeff and Mubayi, Dhruv},
  journal={SIAM Journal on Discrete Mathematics},
  volume={30},
  number={2},
  pages={1165--1180},
  year={2016},
  publisher={SIAM}
}

@article{cooper2017sparse,
  title={Sparse hypergraphs with low independence number},
  author={Cooper, Jeff and Mubayi, Dhruv},
  journal={Combinatorica},
  volume={37},
  pages={31--40},
  year={2017},
  publisher={Springer}
}

@inproceedings{erdHos1973problems,
  title={Problems and results on 3-chromatic hypergraphs and some related questions},
  author={Erd{\H{o}}s, Paul and Lov{\'a}sz, L{\'a}szl{\'o}},
  booktitle={in Infinite and Finite Sets (A. Hajnal et al., eds)},
  year={1975},
  organization={North-Holland, Amsterdam}
}

@article{frieze2011randomly,
  title={Randomly coloring simple hypergraphs},
  author={Frieze, Alan and Melsted, P{\'a}ll},
  journal={Information Processing Letters},
  volume={111},
  number={17},
  pages={848--853},
  year={2011},
  publisher={Elsevier}
}

@article{frieze2008chromatic,
  title={On the chromatic number of simple triangle-free triple systems},
  author={Frieze, Alan and Mubayi, Dhruv},
  journal={the electronic journal of combinatorics},
  volume={15},
  number={1},
  pages={R121},
  year={2008}
}

@article{frieze2013coloring,
  title={Coloring simple hypergraphs},
  author={Frieze, Alan and Mubayi, Dhruv},
  journal={Journal of Combinatorial Theory, Series B},
  volume={103},
  number={6},
  pages={767--794},
  year={2013},
  publisher={Elsevier}
}

@article{kostochka2012conflict,
  title={Conflict-free colourings of uniform hypergraphs with few edges},
  author={Kostochka, Alexander and Kumbhat, Mohit and {\L}uczak, T},
  journal={Combinatorics, Probability and Computing},
  volume={21},
  number={4},
  pages={611--622},
  year={2012},
  publisher={Cambridge University Press}
}

@article{kostochka2009coloring,
  title={Coloring uniform hypergraphs with few edges},
  author={Kostochka, Alexandr V and Kumbhat, Mohit},
  journal={Random Structures \& Algorithms},
  volume={35},
  number={3},
  pages={348--368},
  year={2009},
  publisher={Wiley Online Library}
}

@article{li2022chromatic,
  title={The chromatic number of triangle-free hypergraphs},
  author={Li, Lina and Postle, Luke},
  journal={arXiv preprint arXiv:2202.02839},
  year={2022}
}

@inproceedings{erd1975problems,
  title={Problems and results on 3-chromatic hypergraphs and some related questions, in “Infinite and Finite Sets”(A. Hajnal et al., Eds.)},
  author={Erd{\H{o}}s, P and Lov{\'a}sz, L},
  booktitle={Colloq. Math. Soc. J. Bolyai},
  volume={11},
  pages={609},
  year={1975}
}

@article{albertson2004coloring,
  title={Coloring with no $2 $-Colored $ P\_4 $'s},
  author={Albertson, Michael O and Chappell, Glenn G and Kierstead, Hal A and K{\"u}ndgen, Andr{\'e} and Ramamurthi, Radhika},
  journal={the electronic journal of combinatorics},
  pages={R26--R26},
  year={2004}
}

@article{grunbaum1973acyclic,
  title={Acyclic colorings of planar graphs},
  author={Gr\"{u}nbaum, Branko},
  journal={Israel journal of mathematics},
  volume={14},
  number={4},
  pages={390--408},
  year={1973},
  publisher={Springer}
}

@incollection{kostochka1978upper,
  title={Upper bounds of chromatic functions of graphs},
  author={Kostochka, Alexandr V},
  booktitle={Doct. Thesis},
  year={1978},
  publisher={Novosibirsk}
}

@article{fertin2004star,
  title={Star coloring of graphs},
  author={Fertin, Guillaume and Raspaud, Andr{\'e} and Reed, Bruce},
  journal={Journal of Graph Theory},
  volume={47},
  number={3},
  pages={163--182},
  year={2004},
  publisher={Wiley Online Library}
}

@article{alon1991acyclic,
  title={Acyclic coloring of graphs},
  author={Alon, Noga and McDiarmid, Colin and Reed, Bruce},
  journal={Random Structures \& Algorithms},
  volume={2},
  number={3},
  pages={277--288},
  year={1991},
  publisher={Wiley Online Library}
}

@article{esperet2013acyclic,
  title={Acyclic edge-coloring using entropy compression},
  author={Esperet, Louis and Parreau, Aline},
  journal={European Journal of Combinatorics},
  volume={34},
  number={6},
  pages={1019--1027},
  year={2013},
  publisher={Elsevier}
}

@article{ndreca2012improved,
  title={Improved bounds on coloring of graphs},
  author={Ndreca, Sokol and Procacci, Aldo and Scoppola, Benedetto},
  journal={European Journal of Combinatorics},
  volume={33},
  number={4},
  pages={592--609},
  year={2012},
  publisher={Elsevier}
}

@inproceedings{goncalves:lirmm-01233456,
  TITLE = {{Entropy compression method applied to graph colorings}},
  AUTHOR = {Gon{\c c}alves, Daniel and Montassier, Micka{\"e}l and Pinlou, Alexandre},
  BOOKTITLE = {{ICGT: International Colloquium on Graph Theory and Combinatorics}},
  ADDRESS = {Grenoble, France},
  YEAR = {2014},
  MONTH = Jun,
  HAL_ID = {lirmm-01233456},
  HAL_VERSION = {v1},
}

@book{janson2011random,
  title={Random graphs},
  author={Janson, Svante and Luczak, Tomasz and Rucinski, Andrzej},
  year={2011},
  publisher={John Wiley \& Sons}
}

\end{document}